     \newenvironment{problist}
    \newtheorem{thm}{Theorem}
    \newtheorem{lem}[thm]   {Lemma}
    \newtheorem{cor}[thm]   {Corollary}
    \newtheorem{prop}[thm]  {Proposition}
    \newcommand{\term}[1]   {{\bf #1}\index{#1}}
    \newcommand{\inclds}     {\hookrightarrow}
                \newcommand{\K}        {\mathcal{K}}
                \newcommand{\LL}      {\mathcal{L}}
\newcommand{\R}  {\mathcal{R}}
\newcommand{\T}  {\mathcal{T}}
\newcommand{\J}{\mathcal{J}}
    \newcommand{\ZZ}    {\mathbb{Z}}
    \newcommand{\NN}    {\mathbb{N}}
    \newcommand{\QQ}    {\mathbb{Q}}
    \newcommand{\s}     {\Sigma}
    \newcommand{\smsh}  {\wedge}
    \newcommand{\om}    {\Omega}
    \newcommand{\cross} {\times}
    \newcommand{\wdg}   {\vee}
    \newcommand{\of}    {\circ}
    \newcommand{\id}    {\mathrm{id}}
    \newcommand{\twdl}  {\widetilde}
    \newcommand{\sseq}  {\subseteq}
\renewcommand{\P}{\mathcal{P}}
\DeclareMathOperator{\hocolim}{hocolim}
 \DeclareMathOperator{\im}    {im}
    \DeclareMathOperator{\Tor}{Tor}
        \DeclareMathOperator{\conn}{conn}
        \DeclareMathOperator{\map}    {map}
\newcommand{\mmathand}{\qquad\mbox{and}\qquad}
\begin{document}
	
	


\title{Idempotent functors that preserve cofiber sequences
and split suspensions
}



\author{Jeffrey Strom}
\address{Department of Mathematics\\
Western Michigan University\\
Kalamazoo, MI\\
49008-5200 USA}
\email{Jeff.Strom@wmich.edu}	

\subjclass[2010]
{Primary 
55P60,  
55P62; 
Secondary 
55P35,  
55P40  
}

\keywords{localization, rationalization, suspension}


\begin{abstract}
We   show that  an $f$-localization functor
 $L_f$ commutes with cofiber sequences
of $(N-1)$-connected finite complexes if and only if its restriction
to the collection of $(N-1)$-connected 
finite complexes is $R$-localization for some unital subring
$R\sseq\QQ$.  
This leads to    a homotopy-theoretical  characterization
of the rationalization functor:  the restriction of 
$L_f$ to  simply-connected 
spaces (not just the finite complexes)
 is rationalization if and only if  $L_f(S^2)$ is nontrivial
and simply-connected, $L_f$ preserves cofiber sequences
 of simply-connected finite complexes, and 
 for each simply-connected finite complex
$K$, $\s^k L_f(K)$ splits as a wedge of copies
of $L_f(S^n)$ for large enough $k$
and various values of $n$.
\end{abstract}

\maketitle
\pagestyle{myheadings}
\markboth{{J. Strom}}
{{Idempotent functors that preserve cofiber sequences
and split suspensions}}

%
%

\section*{Introduction}

Let $f:P\to Q$ be a map from one CW complex to another.
The $f$-localization functor $L_f$ 
is the universal example of a homotopically idempotent
functor from spaces to spaces
which carries $f$ to a weak equivalence.  
These functors are the primary---conceivably the 
only\footnote{It has been shown 
\cite{MR2166179} that it is impossible 
to prove in ZFC that all homotopically idempotent functors
take the form $L_f$ for some $f$; but  in the presence of
Vop\v enka's Principle (which is thought, but not known, to 
be consistent with ZFC),   every homotopically idempotent
functor \textit{is} of the form
 $L_f$ for some well-chosen map $f$.}---examples of homotopically 
idempotent functors.
 
The $R$-localization functors
are among  
the earliest and 
best-behaved examples of localization functors, 
defined for (unital)
subrings $R\sseq \QQ$;
we denote them by $L_R$.
There are various constructions of these,   but all of them agree 
on simply-connected spaces.  They were constructed 
with the intention of lifting to spaces the algebraic operation 
of $R$-localization applied to homotopy and homology groups.
This prescribed algebraic effect guarantees two 
nice homotopy-theoretical
properties of $R$-localization:    $L_R$ 
carries cofiber sequences of simply-connected spaces to 
cofiber sequences and similarly for fiber sequences.

Rationalization is the special case $R = \QQ$, and here the theory
is most powerful.  The central theorems of 
Quillen \cite{MR0258031}
and Sullivan
\cite{MR0442930} show that the homotopy theory  of
simply-connected rational spaces
(i.e., 
simply-connected spaces for which $X\to L_\QQ(X)$
is a weak equivalence) is perfectly modeled   
by the algebraic homotopy theories
of DGLAs  and of CDGAs, respectively.    Rational spaces have 
two additional   
homotopy-theoretic properties beyond 
   those enjoyed by all simply-connected $R$-local spaces:  
 their suspensions
 split as  wedges of 
copies of $L_\QQ(S^n)$ for various values of $n$; and,
 dually, their loop spaces  split as  
products of copies of $K(\QQ,n)$.

Many
 noncomputational theorems about  
 rationalizations which were
  first proved via the algebraic machinery provided
by Quillen and Sullivan can also be proved
by 
appealing to the basic homotopy-theoretical properties 
noted above.  For example,  the 
celebrated Mapping Theorem for Lusternik-Schnirelmann
 category   was 
originally proved using Sullivan 
models \cite{MR664027}, but later 
a much simpler proof was found that was based on the splitting
of loop spaces \cite{MR790674}.
Examples like this led the author to wonder    if 
any localization 
  functor $L_f$ that 
  satisfies the four homotopy-theoretical properties
  listed above must be related to rationalization in some way.
   
 In fact,  rationalization is
   determined---o\-ver\-de\-ter\-min\-ed!---by these properties:
  it is the only
  localization functor of simply-connected spaces that 
  preserves   cofiber sequences and   splits suspensions.  
  Furthermore,  the restriction 
  of $R$-localization to simply-connected finite complexes
  is characterized by the fact that it commutes with cofiber sequences.
  These are the main results of this paper.

We write $\K(N)$ for  the collection of all $(N-1)$-connected
finite complexes.


 \begin{thm}
 \label{thm:Main1}
  Let  $f:P\to Q$ be a map of CW complexes.
Let $N\geq 2$ and suppose $L_f(S^N)$ is 
simply-connected
 and not contractible.
Then 
 the following are equivalent:
 \begin{enumerate}
 \item
the restriction of 
$L_f$ to $\K(N)$
 is $R$-localization for
some (unital) subring $R\sseq \QQ$,
 \item
 $L_f$ commutes with cofiber sequences\footnote{For any 
 $f$, there is a  natural comparison   map  $\xi:C_{L_f(\alpha)} 
 \to L_f(C_\alpha)$;  $L_f$ commutes with the cofiber sequence 
 $X\xrightarrow{\alpha} Y \to C_\alpha$ if this transformation
 is a weak equivalence.}
 of
$(N-1)$-connected finite complexes.
  \end{enumerate}
 \end{thm}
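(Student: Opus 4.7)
The plan is to handle the two directions separately. The implication (1) $\Rightarrow$ (2) is formal: if $L_f|_{\K(N)}=L_R$ for some $R\sseq\QQ$, then for any cofiber sequence $X\xrightarrow{\alpha}Y\to C_\alpha$ in $\K(N)$, the comparison map $\xi:C_{L_R(\alpha)}\to L_R(C_\alpha)$ is a homology isomorphism by the five lemma applied to the long exact sequence in $\twdl H_*(-;\ZZ)$, using that $L_R$ tensors integral homology with $R$ and that $R\sseq\QQ$ is $\ZZ$-flat. Both source and target are simply-connected and $R$-local (for simply-connected spaces, $R$-locality is detected by having $R$-module integral homology, a property inherited under cofiber constructions), so $\xi$ is a weak equivalence by Whitehead's theorem.

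For (2) $\Rightarrow$ (1), I would first extract two formal consequences of cofiber preservation on $\K(N)$. Applied to $X\to CX\to\Sigma X$ with $L_f(CX)\simeq *$, it yields $L_f(\Sigma X)\simeq\Sigma L_f(X)$, and in particular $L_f(S^n)\simeq\Sigma^{n-N}L_f(S^N)$ for all $n\geq N$. Applied to the split cofiber sequence $X\hookrightarrow X\vee Y\to Y$ (with section provided by the wedge inclusion of $Y$), it yields $L_f(X\vee Y)\simeq L_f(X)\vee L_f(Y)$, so $L_f$ is additive on finite wedges of $N$-spheres in $\K(N)$.

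The crux is to identify $L_f(S^N)$ as a Moore space $M(R,N)$ for a unital subring $R\sseq\QQ$. Set $R=H_N(L_f(S^N);\ZZ)$. The main tool is the family of cofiber sequences $S^N\xrightarrow{n}S^N\to M(\ZZ/n,N+1)$: applying $L_f$ and invoking (2) produces cofiber sequences $L_f(S^N)\xrightarrow{n}L_f(S^N)\to L_f(M(\ZZ/n,N+1))$, whose long exact sequence in $\twdl H_*(-;\ZZ)$ controls the action of $n$ on every $H_i(L_f(S^N);\ZZ)$. Combined with the ring structure on $R\cong [L_f(S^N),L_f(S^N)]\cong\pi_N(L_f(S^N))$ coming from composition of self-maps (the second isomorphism furnished by $L_f$-universality), this analysis is what must force $H_*(L_f(S^N);\ZZ)$ to vanish outside degrees $0$ and $N$ and simultaneously force $R$ to be a torsion-free, rank-one unital $\ZZ$-algebra, i.e., a unital subring of $\QQ$. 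Then $L_f(S^N)\simeq M(R,N)\simeq L_R(S^N)$ by Hurewicz and Whitehead.

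The last step is a cellular induction: any $X\in\K(N)$ is obtained from a finite wedge of $N$-spheres by iterated cell attachments in dimensions $\geq N+1$, giving cofiber sequences $S^n\to X^{(n)}\to X^{(n+1)}$ entirely within $\K(N)$; since both $L_f$ (by (2)) and $L_R$ (by the direction already proved) preserve these, agreement on spheres and wedges of spheres propagates inductively to all of $\K(N)$. The main obstacle is the crux step: forcing $R\sseq\QQ$ rather than something more exotic such as $\ZZ_p$, which would arise from $p$-completion. The subtle point is that hypothesis (2) is strictly stronger than merely inducing homology isomorphisms---the comparison map $\xi$ must be a genuine weak equivalence---and it is precisely this strengthening that rules out $p$-completion and other homological localizations with coefficient rings outside $\QQ$, forcing the rank-one and torsion-free conditions on $R$ via the systematic analysis of the $M(\ZZ/n,N+1)$ cofiber sequences described above.
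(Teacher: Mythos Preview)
Your outline for $(1)\Rightarrow(2)$ and the two formal consequences of $(2)$ (suspension and finite wedges) are fine and match the paper. The real problem is the part you yourself flag as the crux.

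\textbf{The Moore-space approach does not carry the weight you place on it.} From the cofiber sequences $S^N\xrightarrow{n}S^N\to M(\ZZ/n,N)$ (note: the cofiber is $M(\ZZ/n,N)$, not $M(\ZZ/n,N+1)$) and hypothesis $(2)$ you obtain cofiber sequences
\[
L_f(S^N)\xrightarrow{\,L_f(n)\,}L_f(S^N)\longrightarrow L_f(M(\ZZ/n,N)),
\]
but you do not know the right-hand term, and you do not yet know that $L_f(n)$ acts as multiplication by $n$ on homology. So the associated long exact sequence gives essentially no constraint on $H_i(L_f(S^N);\ZZ)$ for $i>N$, and none at all on the connectivity of $L_f(S^N)$. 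In particular, your identification $R=H_N(L_f(S^N);\ZZ)\cong\pi_N(L_f(S^N))\cong[L_f(S^N),L_f(S^N)]$ presupposes $\conn(L_f(S^N))=N-1$, which the hypotheses only give for $N=2$; for $N>2$ this must be \emph{proved}, and nothing in your outline does so. Likewise, nothing in the outline forces $\twdl H_*(L_f(S^N);\ZZ)$ to vanish above degree $N$, nor forces the resulting ring to be torsion-free of rank one.

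The paper supplies the missing mechanism, and it is not Moore spaces but \emph{smash products}. Using $(2)$ together with the (unconditional) fact that $L_f$ commutes with finite products, the cofiber sequence $S^m\vee S^n\to S^m\times S^n\to S^m\wedge S^n$ yields weak equivalences
\[
L_f(S^m)\smsh L_f(S^n)\ \simeq\ L_f(S^{m+n})\ \simeq\ \Sigma^m L_f(S^n)
\]
for $m,n>N$. Comparing connectivities of the two ends forces $\conn(L_f(S^n))=n-1$. Applying K\"unneth to $L_f(S^n)\smsh L_f(S^n)\simeq\Sigma^n L_f(S^n)$ then forces $\twdl H_*(L_f(S^n);\ZZ)$ to be concentrated in a single degree, say equal to a group $R$, and simultaneously produces an isomorphism $\mu:R\otimes_\ZZ R\xrightarrow{\ \cong\ }R$ with $\Tor_\ZZ(R,R)=0$. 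Thus $R$ is a \emph{solid ring} with vanishing $\Tor$, and the Bousfield--Kan classification of solid rings then gives $R\subseteq\QQ$. This smash/solid-ring step is the genuine idea your plan is missing; the family $M(\ZZ/n,N)$ does not substitute for it.

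Finally, your inductive endgame needs a \emph{natural comparison map} between $L_R$ and $L_f$ on $\K(N)$, not merely abstract equivalences on skeleta; otherwise the equivalences on successive cofibers need not assemble. The paper first checks that $\Sigma^{N-1}q_R$ is an $f$-equivalence (this uses that $L_f$ preserves the co-$H$ structure on spheres, so $L_f(p_n)=p\cdot\id$), which produces the comparison $\xi:L_{\Sigma^{N-1}q_R}\to L_f$, and then runs the cell-by-cell induction through $\xi$.
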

 
 

%

Taking $N=2$ in
Theorem \ref{thm:Main1}, we obtain 
 a   characterization of the 
rationalization of simply-connected spaces
in terms of the elementary notions of 
homotopy theory.

\begin{thm}
\label{thm:Qchar}
The restriction of a localization functor $L_f$ 
to simply-connected
spaces is rationalization if and only if 
the following three conditions hold:
\begin{itemize}
\item
$L_f(S^2)$ is  simply-connected and not weakly contractible,
\item
$L_f$ commutes with cofiber sequences  of 
simply-connected finite
complexes, and
\item
if $K$ is a simply-connected finite complex, then for some 
$k\in \NN$, the suspension $\s^k L_f(K)$
splits as a wedge of copies of $L_f(S^n)$ for various values of 
$n$.
\end{itemize}
 \end{thm}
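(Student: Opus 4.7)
For the forward direction, $L_\QQ$ satisfies all three bullets on simply-connected spaces: $L_\QQ(S^2)$ has $\pi_2 = \QQ$ and is simply-connected; $L_\QQ$ preserves cofiber sequences of simply-connected spaces (classical); and suspensions of rational simply-connected finite complexes split as wedges of rational spheres, for instance via a minimal Sullivan model or because rational co-$H$-spaces are wedges of spheres.

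For the nontrivial direction, the first two hypotheses on $L_f$ are precisely condition (2) of \thmref{thm:Main1} for $N=2$, so that theorem yields a unital subring $R \sseq \QQ$ with $L_f|_{\K(2)} \simeq L_R$. The splitting hypothesis then forces $R = \QQ$: if some prime $p$ is not invertible in $R$, take the simply-connected finite Moore space $M = S^2 \cup_p e^3 \in \K(2)$, so that $L_f(M) = L_R(M)$ is a non-contractible $R$-local Moore space whose reduced integral homology contains a summand of $\ZZ/p$. This torsion persists in $\s^k L_f(M)$ for every $k$. By the splitting hypothesis, $\s^k L_f(M)$ splits as a wedge of copies of $L_f(S^n) = L_R(S^n)$ for various $n$; but each of these has integral homology isomorphic to the torsion-free ring $R$ concentrated in a single degree, so any such wedge has torsion-free homology — a contradiction. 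Hence $R = \QQ$ and $L_f$ agrees with $L_\QQ$ on $\K(2)$.

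Extending this equality from $\K(2)$ to all simply-connected spaces is the heart of the argument. For a simply-connected $X$, write $X \simeq \hocolim_\alpha K_\alpha$ as the filtered homotopy colimit of its simply-connected finite subcomplexes. Since $L_f$-equivalences are closed under filtered hocolims, the map $X \to \hocolim L_f(K_\alpha)$ is an $L_f$-equivalence; and since $L_f(K_\alpha) = L_\QQ(K_\alpha)$ and $L_\QQ$ commutes with filtered hocolims of simply-connected spaces, the target is identified with $L_\QQ(X)$. Thus $L_f(X) \simeq L_\QQ(X)$ provided $L_\QQ(X)$ is $L_f$-local. One inclusion in the required identification of local classes is immediate: because $L_f$ annihilates every simply-connected mod-$p$ Moore space, the cofiber sequence $S^n \xrightarrow{p} S^n \to S^n \cup_p e^{n+1}$ forces multiplication by $p$ to be invertible on $\pi_n$ of any simply-connected $L_f$-local space for every prime $p$ and every $n \geq 2$, so such a space is automatically rational.

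The main obstacle is the converse: showing every simply-connected rational space is $L_f$-local, or equivalently that $L_f$ inverts every rational equivalence between simply-connected spaces. For finite sources and targets this follows from \thmref{thm:Main1}, but going past the finite case is delicate because $L_f$ need not commute with arbitrary colimits, and its class of local objects need not be closed under all filtered hocolims a priori. I would attempt this by leveraging the closure of $L_f$-equivalences under wedges, cofiber sequences, and filtered hocolims, together with the fact that every simply-connected rational space can be built from rational spheres via these operations, in order to bootstrap the finite-complex comparison to the general setting.
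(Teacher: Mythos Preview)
Your argument is correct and matches the paper through the identification $R=\QQ$ on $\K(2)$; the Moore-space torsion obstruction is exactly the paper's reasoning. You also correctly isolate the remaining task: showing that every simply-connected rational space is $f$-local. But your proposed attack on that step---via closure of $f$-equivalences under wedges, cofibers, and filtered hocolims---is aimed at the wrong closure property. You already know (from $\s q_\QQ$ being an $f$-equivalence) that $X\to L_\QQ(X)$ is an $f$-equivalence for every simply-connected $X$; the issue is purely whether the \emph{target} $L_\QQ(X)$ lies in $\LL=\im(L_f)$. Colimit closure of $f$-equivalences does not help with that, and, as you note, $\LL$ is not closed under hocolims in general.

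The paper's missing idea is to exploit the \emph{dual} closure: $\LL$ is closed under homotopy \emph{limits} (\propref{prop:LResolving}), hence under loops, retracts, products, and towers. Since $L_f(S^n)\simeq S^n_\QQ$ lies in $\LL$ for all $n\ge 2$, and $K(\QQ,n-1)$ is a retract of $\om S^n_\QQ$, every $K(\QQ,n)$ is in $\LL$; then arbitrary $K(V,n)$ for rational $V$ are in $\LL$ as products, and finally every simply-connected rational space is in $\LL$ by induction up its Postnikov tower (each stage a homotopy pullback over a rational Eilenberg--MacLane space). This is \lemref{lem:Qresclass}. With that in hand, \propref{prop:flocalisqlocal} gives $L_f(X)\simeq L_{\s q_\QQ}(X)\simeq L_\QQ(X)$ for all simply-connected $X$, finishing the proof. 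So the gap in your proposal is precisely this resolving-class/Postnikov argument; your hocolim strategy does not supply it.
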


  We conclude the introduction with two bits of speculation.
  
Our proof of Theorem \ref{thm:Main1} comes
 very close to showing that $L_f$ commutes
  with cofiber sequences (and hence restricts to 
  $R$-localization on $\K(N)$) 
  if and only if  $L_f$
     `respects the smash and suspension 
  structure   of spheres'---that is, 
  if $\s L_f(S^{n}) $ and $L_f(S^n)\smsh L_f(S^m)$
  are $f$-local for all $m,n\geq N$.
  Using a relative version of the  theory of resolving classes\footnote{See
  \cite{CLARK} for an account of the absolute theory.}, 
  we have been able to prove this equivalence 
  under the additional assumption that
   $f$
  factors up to homotopy through a finite-dimensional complex
  (we have not included that proof here). 
Is 
it true  for all $f$?

Second, let us say that 
$L_f$ `respects connectivity'  if for any $n$-connected
 $K\in K(N)$, $L_f(K)$ is also $n$-connected.
 If $L_f$ respects connectivity, must it 
satisfy the conditions of Theorem \ref{thm:Main1}?

\medskip
 
 \noindent{\textbf{Acknowledgement.}}
 The author gratefully acknowledges the 
 assistance of Steven Landsburg (via MathOverflow)
 on the algebraic implications of the vanishing of $\Tor$.
Javier 
 Guti\'errez provided   valuable feedback 
 on an earlier version of this paper, as did the 
referee.

\section{Preliminaries}

We'll work  in a fixed convenient category $\T_*$ of pointed 
topological spaces; for example $\T_*$ could be the
category of compactly generated weak Hausdorff
spaces.
If so inclined, the reader  
 may pretend that this paper
was written simplicially.

We write $\conn(X) = n$ if $X$ is $n$-connected but not 
$(n+1)$-connected.

 \subsection{Localization Functors}
Let $f:P\to Q$ be a map between pointed CW complexes.  
A pointed space $X$ is said to be \term{$f$-local} if 
the induced map 
\[
f^* : \map_*(Q,X) \longrightarrow\map_*(P,X)
\]
of pointed mapping spaces
is a weak homotopy equivalence;  a map
$q:X\to Y$ is called an \term{$f$-equivalence}
if for every 
$f$-local space $Z$
the induced map
\[
q^* : \map_*(Y,Z) \longrightarrow\map_*(X,Z)
\]
is a weak homotopy equivalence.  A map $i:X\to L$ is 
said to be an
\term{$f$-localization} of $X$ if $L$ is $f$-local and $i$
is an $f$-equivalence.

The following important properties follow easily  
from the definitions.  

\begin{lem}
\label{lem:factorization}
{\quad}
\begin{problist}
\item
An $f$-equivalence between $f$-local spaces
is a weak equivalence.
\item
Let  $i:X\to L$ be $f$-localization 
and let  $g:X\to Z$ with  $Z$ $f$-local.
Then in the diagram
\[
\xymatrix{
X\ar[rr]^-g \ar[d]_i && Z
\\
L\ar@{..>}@/_/[rru]_\gamma
}
\]
there is a  map $\gamma$, unique up to homotopy,
making the triangle commute up to homotopy.
\end{problist}
\end{lem}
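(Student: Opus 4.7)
The plan is to derive both parts directly from the definitions of ``$f$-local'' and ``$f$-equivalence,'' using the spaces $X$, $L$, and $Z$ themselves as test targets in the mapping-space criterion. Neither statement requires any construction beyond the definitions, so I expect the argument to be essentially formal; the only ``obstacle'' is being careful to unpack the correct mapping-space weak equivalence at each step and to read off the desired information at the level of $\pi_0$ (existence of lifts) and $\pi_1$-with-basepoint (uniqueness up to homotopy).

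For part (b), I will start here because it is the simplest. Given an $f$-localization $i:X\to L$ and a map $g:X\to Z$ with $Z$ an $f$-local space, the definition of $f$-equivalence applied to $i$ and the test space $Z$ yields that
\[
i^* : \map_*(L,Z)\longrightarrow \map_*(X,Z)
\]
is a weak homotopy equivalence. In particular it is a bijection on $\pi_0$, so the homotopy class of $g$ is hit by the homotopy class of some map $\gamma:L\to Z$, giving $\gamma\of i\simeq g$. Injectivity on $\pi_0$ yields uniqueness of $\gamma$ up to homotopy.

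For part (a), I will imitate the classical Yoneda-style argument. Let $q:X\to Y$ be an $f$-equivalence with both $X$ and $Y$ $f$-local. First I plug $Z=X$ into the definition of $f$-equivalence: $q^*:\map_*(Y,X)\to\map_*(X,X)$ is a weak equivalence, so on $\pi_0$ there is a map $r:Y\to X$ with $r\of q\simeq\id_X$. Next I plug $Z=Y$ into the definition: $q^*:\map_*(Y,Y)\to\map_*(X,Y)$ is a weak equivalence, hence injective on $\pi_0$. Since $q\of(r\of q)\simeq q\of\id_X = q = \id_Y\of q$, the two classes $q\of r$ and $\id_Y$ in $\pi_0\map_*(Y,Y)$ have the same image under $q^*$, so $q\of r\simeq \id_Y$. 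Thus $q$ is a homotopy equivalence and in particular a weak equivalence.

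Because each step is just a single application of the defining weak equivalence of mapping spaces, I do not anticipate a real obstacle; the writeup will amount to naming the test space correctly in each case and invoking bijectivity on $\pi_0$. One small bookkeeping point I will watch for is that the definition of $f$-equivalence in the paper is phrased for maps into $f$-local $Z$, and in part (a) I need to plug in $Z=X$ and $Z=Y$, which is exactly what the hypothesis permits.
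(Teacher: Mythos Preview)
Your proposal is correct and is precisely the standard unpacking of the definitions; the paper does not give an explicit proof, merely remarking that these properties ``follow easily from the definitions,'' which is exactly what you have carried out.
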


%
%
%



An \term{coaugmented functor} is a functor 
$F:\T_*\to \T_*$ equipped with a natural transformation
(its  coaugmentation)
$\iota :\id \to F$.  A coaugmented functor $F$ is 
\term{homotopically idempotent} if for every 
$X$, the maps
\[
\iota_{F(X)}, F(\iota_X): F(X) \longrightarrow F( F(X))
\]
are homotopic to one another, and both are weak equivalences.
An \term{$f$-localization functor} is a
homotopically idempotent functor $L_f$ 
such that, for every space $X$, the coaugmentation
$\iota_X: X\to L_f(X)$ is an $f$-localization of $X$.

The main existence  theorem is as follows
\cite[Thm.1.A.3]{MR1392221}.

\begin{thm}[Bousfield, Farjoun]
\label{thm:LfExists}
For any map $f:P\to Q$    between CW complexes,
there exists an $f$-localization functor $L_f$.
\end{thm}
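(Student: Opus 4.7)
The plan is to construct $L_f(X)$ via a transfinite small object argument, the same strategy Quillen used for Bousfield's original localization theorem.

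First, I would translate the $f$-local condition into a lifting property. Replacing $Q$ by the mapping cylinder of $f$, I may assume $f\colon P\to Q$ is a cofibration. Then $X$ being $f$-local, i.e., $f^*\colon\map_*(Q,X)\to\map_*(P,X)$ being a weak equivalence, is equivalent to $X\to *$ having the right lifting property (up to homotopy) with respect to a small set $\mathcal{J}$ of ``generating'' cofibrations built from $f$: namely the pushout-product maps
\[
\mathcal{J}=\bigl\{\,(P\smsh D^n_+)\cup_{P\smsh S^{n-1}_+}(Q\smsh S^{n-1}_+)\;\longrightarrow\;Q\smsh D^n_+\,\bigr\}_{n\geq 0},
\]
together with $f$ itself. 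A standard adjunction/exponential-law check shows that lifting against all $j\in\mathcal{J}$ encodes precisely surjectivity and injectivity of $\pi_n(f^*)$ for all $n\geq 0$.

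Second, I would run the small object argument. Starting with $X_0=X$, at successor stage $\alpha+1$ form the pushout
\[
\xymatrix{
\coprod_{\sigma} A_\sigma \ar[r]\ar[d] & X_\alpha\ar[d]
\\
\coprod_{\sigma} B_\sigma \ar[r] & X_{\alpha+1}
}
\]
where $\sigma$ ranges over all commuting squares from some $j_\sigma\colon A_\sigma\to B_\sigma$ in $\mathcal{J}$ into $X_\alpha\to *$. At limit ordinals take the colimit (telescope). Choose a regular cardinal $\lambda$ strictly larger than the cardinalities of the cells of $P$ and $Q$; then every map from $P$ (or from any $A_\sigma$) into the colimit $L_f(X):=\mathrm{colim}_{\alpha<\lambda}X_\alpha$ factors through some $X_\alpha$, by the usual smallness argument. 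Consequently every lifting problem against $\mathcal{J}$ is solvable in $L_f(X)$, so $L_f(X)$ is $f$-local.

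Third, I would check that the coaugmentation $\iota_X\colon X\to L_f(X)$ is an $f$-equivalence. Each map in $\mathcal{J}$ is an $f$-equivalence (for the pushout-product maps, one uses that smashing a weak equivalence of $f$-local targets with $S^{n-1}\to D^n$ preserves the property, or dually that $f$-equivalences are closed under pushout-products with cofibrations). Since $f$-equivalences are closed under coproducts, cobase change, and transfinite composition, $\iota_X$ is an $f$-equivalence. Together with $f$-locality of $L_f(X)$, this makes $\iota_X$ an $f$-localization. Functoriality is automatic from the canonical choice of lifting squares at each stage, and homotopical idempotence follows from Lemma \ref{lem:factorization}(a) applied to the $f$-equivalence $\iota_{L_f(X)}$ between the $f$-local spaces $L_f(X)$ and $L_f(L_f(X))$.

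The main obstacle is the cardinality bookkeeping in the small object argument: choosing $\lambda$ large enough that $P$ and $Q$ (and in fact all domains $A_\sigma$) are $\lambda$-small relative to the class of cellular inclusions generated by $\mathcal{J}$, while also ensuring $\lambda$ is a regular cardinal so that the telescope argument produces an actual lift rather than merely a system of partial lifts. This is exactly the technical heart of the Bousfield--Farjoun construction, and is where the presentation of $f$ by a single map between CW complexes (rather than, say, a proper class of maps) is essential.
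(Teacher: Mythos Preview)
The paper does not prove this theorem at all: it simply quotes it as \cite[Thm.~1.A.3]{MR1392221} and moves on. So there is no ``paper's own proof'' to compare against; your task here was really to reproduce the Bousfield--Farjoun construction from the cited reference.

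Your sketch is the standard small object argument and is essentially correct in outline. A couple of places could be tightened. First, the justification that the pushout-product maps in $\mathcal{J}$ are $f$-equivalences is phrased oddly (``smashing a weak equivalence of $f$-local targets\ldots''); the clean argument is dual: for $f$-local $Z$, applying $\map_*(-,Z)$ to the pushout-product of $f$ with $S^{n-1}_+\hookrightarrow D^n_+$ yields the pullback-power of the trivial fibration $f^*$ with a cofibration, which is again a trivial fibration. Second, ``functoriality is automatic from the canonical choice of lifting squares'' deserves one more sentence: because at each successor stage you attach cells indexed by \emph{all} squares, a map $g\colon X\to Y$ induces a map of indexing sets (by postcomposition) and hence a strictly commuting ladder $X_\alpha\to Y_\alpha$; naturality of the coaugmentation then follows. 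Finally, your cardinal bound should be large enough to dominate not just the cells of $P$ and $Q$ but of all the domains $A_\sigma$ (which are built from them), as you note at the end; in practice one takes $\lambda$ regular and strictly greater than $\aleph_0 + |P| + |Q|$.

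With those small clarifications, your proposal is a faithful outline of the construction in Farjoun's book, which is exactly what the paper is invoking.
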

 
Here are some basic properties of $f$-localization.

 \begin{prop}
 \label{prop:Lfproperties}
Let $f:P\to Q$ be a map of CW complexes.
 \begin{problist}
 \item
 If $q: X\to Y$ is an $f$-equivalence, then so is 
 $q\smsh \id_Z$ for any space $Z$.
 \item
 $L_f(q)$ is a weak equivalence if and only if $q$
 is an $f$-equivalence.
 \item
 For any $X$ and $Y$, the natural   map
 $L_f(X\cross Y) \to L_f(X) \cross L_f(Y)$ is a weak equivalence.
 \end{problist}
 \end{prop}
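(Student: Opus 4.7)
My plan is to prove the three parts in order, since part~(a) feeds into the argument for part~(c). For part~(a), I fix an $f$-local space $W$ and apply the smash-hom adjunction twice. The first step is to show that $\map_*(Z,W)$ is itself $f$-local: the map $f^*:\map_*(Q,\map_*(Z,W))\to\map_*(P,\map_*(Z,W))$ corresponds under adjunction (using commutativity of $\smsh$) to $\map_*(Z,f^*):\map_*(Z,\map_*(Q,W))\to\map_*(Z,\map_*(P,W))$, a weak equivalence because $W$ is $f$-local. The map $(q\smsh\id_Z)^*:\map_*(Y\smsh Z,W)\to\map_*(X\smsh Z,W)$ then corresponds under adjunction to $q^*:\map_*(Y,\map_*(Z,W))\to\map_*(X,\map_*(Z,W))$, a weak equivalence since $q$ is an $f$-equivalence and the target is $f$-local.

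For part~(b), I invoke a two-out-of-three property for $f$-equivalences, which is a direct consequence of two-out-of-three for weak equivalences (applied to $\map_*(-,Z)$ for $Z$ $f$-local); combined with the observation that every weak equivalence is trivially an $f$-equivalence, this applies to the naturality square $\iota_Y\of q = L_f(q)\of\iota_X$. In the forward direction, $q$ and $\iota_X$ being $f$-equivalences makes $L_f(q)\of\iota_X$, and hence $L_f(q)$, an $f$-equivalence; \lemref{lem:factorization}(a) then upgrades $L_f(q)$ to a weak equivalence since $L_f(X)$ and $L_f(Y)$ are $f$-local. The reverse direction runs the same chain backward, using that the weak equivalence $L_f(q)$ is in particular an $f$-equivalence.

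Part~(c) is where the content lies. First observe that $L_f(X)\cross L_f(Y)$ is $f$-local, since $\map_*(-,B\cross C)\simeq\map_*(-,B)\cross\map_*(-,C)$ makes the relevant $f^*$ a product of weak equivalences. Both $L_f(X\cross Y)$ and $L_f(X)\cross L_f(Y)$ are therefore $f$-local, so by \lemref{lem:factorization}(a) the comparison map $\xi$ is a weak equivalence if and only if it is an $f$-equivalence; applying two-out-of-three to $\xi\of\iota_{X\cross Y}=\iota_X\cross\iota_Y$ reduces me to showing that $\iota_X\cross\iota_Y:X\cross Y\to L_f(X)\cross L_f(Y)$ is an $f$-equivalence. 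The plan is to exploit the natural cofiber sequence $X\wdg Y\to X\cross Y\to X\smsh Y$ together with its $L_f$-counterpart. For any $f$-local $Z$, applying $\map_*(-,Z)$ produces a ladder of fibration sequences; the map on wedges is a weak equivalence because $\iota_X$ and $\iota_Y$ are $f$-equivalences, and the map on smashes is a weak equivalence by two applications of part~(a) (factor $\iota_X\smsh\iota_Y$ as $(\id_{L_f(X)}\smsh\iota_Y)\of(\iota_X\smsh\id_Y)$). A five-lemma argument on the resulting long exact sequences of homotopy groups forces the middle map to be a weak equivalence. The main obstacle is precisely here: products are not directly controlled by part~(a), so the argument must be routed through this cofiber decomposition, isolating the product into wedge and smash pieces that are individually tractable.
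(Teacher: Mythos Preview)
The paper itself does not prove this proposition: it simply cites Farjoun's book for all three parts. Your proposal therefore cannot be ``the same approach''; you are supplying arguments where the paper defers to the literature. Your treatments of (a) and (b) are correct and standard: the smash--hom adjunction for (a), and two-out-of-three for $f$-equivalences applied to the naturality square for (b), are exactly the right moves.

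Part (c), however, has a genuine gap. After reducing to showing that $\iota_X\times\iota_Y$ is an $f$-equivalence, you apply $\map_*(-,Z)$ to the ladder of cofiber sequences $X\wdg Y\to X\cross Y\to X\smsh Y$ and invoke a five-lemma argument on the resulting long exact sequences. The problem is that this long exact sequence terminates on the right at the pointed sets
\[
[X\smsh Y,Z]_*\longrightarrow [X\cross Y,Z]_*\longrightarrow [X\wdg Y,Z]_*,
\]
and exactness of pointed sets only controls the preimage of the \emph{basepoint} of $[X\wdg Y,Z]_*$. The five lemma therefore yields isomorphisms on $\pi_n$ for $n\geq 1$ at the basepoint, but not the required bijection on $\pi_0$. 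Concretely: the fibration $\map_*(X\cross Y,Z)\to\map_*(X\wdg Y,Z)\cong\map_*(X,Z)\cross\map_*(Y,Z)$ need not be surjective on $\pi_0$ (a pair of pointed maps $g\colon X\to Z$, $h\colon Y\to Z$ does not in general extend over $X\cross Y$ --- take $X=Y=S^1$ and $Z$ with nonabelian $\pi_1$), and its fibers over components away from the basepoint are not identified with $\map_*(X\smsh Y,Z)$, so part~(a) gives you no purchase there. A two-point counterexample shows the underlying lemma is false: for fibrations $\{0\}\to\{0,1\}$ and $\{0,1\}\xrightarrow{\id}\{0,1\}$, the obvious map of fiber sequences is a weak equivalence on fibers over the basepoint and on bases, but not on total spaces. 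Farjoun's proof of (c) does not go through this cofiber decomposition; it uses the explicit construction of $L_f$ (or fiberwise techniques) to see that the localization construction itself commutes with finite products.
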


 \begin{proof}
 Parts (a) and 
 (b)
 are Example 1.D.5 and Proposition 1.C.5
 in \cite{MR1392221}, respectively.
Part 
(c)
is \cite[1.A.8(e.4)]{MR1392221}.
\end{proof}

If $q:X\to Y$ is an $f$-equivalence, then 
 every $f$-local space is also $q$-local, and 
 so
 the transformation $j_{L_f}$ in the square
 \[
\xymatrix{
	\id	%
                 \ar[d]_{ \iota }  
                 \ar[rr]^-{  j  }
		\ar@{}[rrd]|{     }
&& 
	L_q	%
                  \ar[d]^{  L_f( \iota ) }
\\
L_f		%
	         \ar[rr]^-{j_{L_f}  }
&&  
L_q\of L_f		%
}
\]
evaluates to a  weak equivalence 
  $j_{L_f(X)}:L_f(X) \to L_q(L_f(X))$  
for every 
space $X$.  
Thus we say that
$\iota:\id\to L_f$   
factors
through
$j:\id\to L_q$
   `up to weak homotopy equivalence'.
Since $L_q\of L_f(X) \sim L_f(X)$ for all $X$, 
the functor $L_q\of L_f$ is a perfectly good choice
of $f$-localization functor.  Thus we will abuse 
notation by silently redefining $L_f$
and referring to $L_f(\iota)$ as a \term{comparison map} 
  $\xi: L_q\to L_f$.
%


\begin{prop}
\label{prop:flocalisqlocal}
Suppose $q$ is an $f$-equivalence.
If $L_q(X)$ is $f$-local, then
the comparison $\xi: L_q(X) \to L_f(X)$ is a weak equivalence.
\end{prop}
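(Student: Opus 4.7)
The plan is to apply Lemma \ref{lem:factorization}(a), which asserts that an $f$-equivalence between $f$-local spaces is automatically a weak equivalence. Thus I would aim to verify two claims about $\xi$: that its source and target are $f$-local, and that $\xi$ itself is an $f$-equivalence.

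The first claim is essentially free: $L_f(X)$ is $f$-local by definition, while $L_q(X)$ is $f$-local by the hypothesis of the proposition.

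For the second, the construction of the comparison map (in the paragraph just preceding the proposition) gives a homotopy-commutative triangle $\iota_X \simeq \xi \circ j_X$, where $\iota_X : X \to L_f(X)$ and $j_X : X \to L_q(X)$ are the two coaugmentations. For any $f$-local space $Z$, applying $\map_*(-,Z)$ yields a factorization
\[
\map_*(L_f(X), Z) \xrightarrow{\xi^*} \map_*(L_q(X), Z) \xrightarrow{j_X^*} \map_*(X, Z)
\]
whose composite $\iota_X^*$ is a weak equivalence, because $\iota_X$ is an $f$-equivalence. Since $q$ is an $f$-equivalence, every $f$-local space is automatically $q$-local; in particular $Z$ is $q$-local, so $j_X^*$ is a weak equivalence as well. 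By 2-out-of-3, $\xi^*$ is then a weak equivalence for every $f$-local $Z$, which is exactly the statement that $\xi$ is an $f$-equivalence.

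I do not anticipate a serious obstacle here, since the proposition is essentially a structural observation packaging Lemma \ref{lem:factorization}(a) together with the universal property of $L_q$. The one place where one might pause is in making sure the factorization $\iota_X \simeq \xi \circ j_X$ is in hand, but this is built into the (redefined) comparison map described in the paragraph immediately before the statement.
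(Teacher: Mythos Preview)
Your proof is correct and follows essentially the same approach as the paper: both arguments hinge on the observation that the $q$-equivalence $j_X:X\to L_q(X)$ is automatically an $f$-equivalence, together with the hypothesis that $L_q(X)$ is $f$-local. The paper phrases the conclusion as a zigzag of weak equivalences $L_f(X)\to L_f(L_q(X))\gets L_q(X)$, whereas you apply the 2-out-of-3 property directly to the factorization $\iota_X\simeq \xi\circ j_X$ and then invoke Lemma~\ref{lem:factorization}(a); this is only a cosmetic difference.
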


\begin{proof}
Any $q$-equivalence   is \textit{a fortiori} an $f$-equivalence.
Since $X\to L_q(X)$ is a $q$-equivalence by definition
and $L_q(X)$ is $f$-local by hypothesis, 
 we have  weak equivalences
$L_f(X) \to L_f(L_q(X))\gets L_q(X)$.
\end{proof}

We conclude with two useful results about 
the behavior  of  $f$-localization with respect to 
  homotopy  colimits and homotopy limits.
 
  \begin{prop}
 \label{prop:cutecofibertrick}
 Let $f:P\to Q$ be a map of CW complexes, and write
 $\LL$
 for the full subcategory of $\T_*$ whose objects
 are the $f$-local spaces.
 Let   $F: \J\to \LL$ be a (small) diagram.  
  If $\hocolim L_f\of F\in \LL$, then 
  the comparison map
  $\hocolim F\to \hocolim L_f\of F$
  induced by the coaugmentations
  is also an  $f$-localization.
  \end{prop}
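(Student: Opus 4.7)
The plan is to verify the two defining conditions of an $f$-localization for the comparison map $\alpha : \hocolim F \to \hocolim L_f\of F$: that its target is $f$-local, and that $\alpha$ itself is an $f$-equivalence. The first condition is given by the hypothesis $\hocolim L_f\of F\in\LL$, so the entire task reduces to establishing that $\alpha$ is an $f$-equivalence.

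To do this, I would fix an arbitrary $f$-local space $Z$ and verify that the precomposition map
\[
\alpha^* : \map_*(\hocolim L_f\of F,\, Z) \longrightarrow \map_*(\hocolim F,\, Z)
\]
is a weak equivalence. The standard natural weak equivalence $\map_*(\hocolim G,\,Z) \simeq \holim \map_*(G(-),\,Z)$ converts this into showing that the induced map $\holim \map_*(L_f\of F(-),\,Z) \to \holim \map_*(F(-),\,Z)$ is a weak equivalence. This I would attack levelwise: for each $j\in\J$ the coaugmentation $\iota_{F(j)}:F(j)\to L_f F(j)$ is by definition an $f$-equivalence, and since $Z$ is $f$-local, the functor $\map_*(-,Z)$ sends it to a weak equivalence $\map_*(L_f F(j),Z)\to \map_*(F(j),Z)$. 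The conclusion then follows from the standard fact that $\holim$ preserves objectwise weak equivalences of pointwise fibrant diagrams, which the mapping-space functors $\map_*(-,Z)$ automatically provide.

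The main obstacle is not conceptual but technical: one must invoke the $\hocolim/\holim$ adjunction on pointed mapping spaces and the levelwise-to-global weak-equivalence property of $\holim$. These are standard homotopy-theoretic facts, but for full rigor they require either appealing to the relevant infrastructure (as developed, for instance, in the references underlying \thmref{thm:LfExists}) or implicitly replacing $F$ by an objectwise cofibrant diagram. The conceptual takeaway is simply that a natural $f$-equivalence between diagrams induces an $f$-equivalence on homotopy colimits, since its mapping-space test reduces to a $\holim$ of levelwise weak equivalences; if in addition the target is already $f$-local, the induced map is automatically the $f$-localization.
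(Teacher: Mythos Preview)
Your argument is correct and is the standard one. The paper does not supply its own proof of this proposition; it simply cites \cite[Thm.~1.D.3]{MR1392221}, so there is nothing to compare against beyond noting that your approach is exactly the expected one.

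One remark worth making: your proof nowhere uses the hypothesis that $F$ lands in $\LL$---it works verbatim for any diagram $F:\J\to\T_*$. This is a feature, not a bug: the application stated immediately after the proposition (for a cofiber sequence $A\xrightarrow{\alpha}B\to C_\alpha$ with $A,B$ arbitrary) requires exactly this more general version, so the restriction to $\LL$ in the statement appears to be a slip. Conversely, if one does take the hypothesis $F:\J\to\LL$ at face value, an even shorter proof is available: each $\iota_{F(j)}$ is then a weak equivalence (not merely an $f$-equivalence), so the induced map on homotopy colimits is a weak equivalence, and the conclusion is immediate.
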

  
%
  
Proposition \ref{prop:cutecofibertrick} 
 is proved in \cite[Thm. 1.D.3]{MR1392221}.
It implies that if 
$A\xrightarrow{\alpha} B\to C$ is a cofiber sequence and 
$C_{L_f(\alpha)}$ is $f$-local, then the natural
comparison map
$C_\alpha  \to C_{L_f(\alpha)}$ is $f$-localization.

 The proof  of the following result is entirely straightforward.

\begin{prop}
\label{prop:LResolving}
For any $f:P\to Q$
the   subcategory
$\LL = \im(L_f)$  is closed under weak homotopy equivalences and 
homotopy limits.
\end{prop}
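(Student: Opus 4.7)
The plan is to identify $\LL = \im(L_f)$ with the full subcategory of $f$-local spaces (up to weak equivalence), and then verify each closure property directly from the definition of $f$-locality.

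First I would justify the identification. Any space in $\im(L_f)$ has the form $L_f(X)$, which is $f$-local by the definition of an $f$-localization functor. Conversely, if $Z$ is $f$-local then the coaugmentation $\iota_Z: Z\to L_f(Z)$ is an $f$-equivalence between $f$-local spaces, hence a weak equivalence by Lemma \ref{lem:factorization}(a), so $Z\simeq L_f(Z)\in \im(L_f)$. Thus, up to weak equivalence, $\LL$ is exactly the class of $f$-local spaces.

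For closure under weak equivalences, suppose $Y$ is $f$-local and $Y\xrightarrow{\sim} Z$ is a weak equivalence. Since $\map_*(K,-)$ takes weak equivalences to weak equivalences for any CW complex $K$, the commutative square
\[
\xymatrix{
\map_*(Q,Y) \ar[r]^-{f^*}_-{\sim} \ar[d]^{\sim} & \map_*(P,Y) \ar[d]^{\sim} \\
\map_*(Q,Z) \ar[r]^-{f^*} & \map_*(P,Z)
}
\]
has three weak equivalences, forcing the bottom $f^*$ to be a weak equivalence as well; so $Z$ is $f$-local.

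For closure under homotopy limits, let $F:\J\to \LL$ be a small diagram of $f$-local spaces. I would use the standard fact that $\map_*(K,-)$ commutes with homotopy limits, so for every CW complex $K$ there is a natural weak equivalence $\map_*(K,\holim F)\simeq \holim_j \map_*(K,F(j))$. Applying this to $K=P$ and $K=Q$ and using that each $F(j)$ is $f$-local, the map $f^*: \map_*(Q,F(j))\to \map_*(P,F(j))$ is a weak equivalence for every $j$, and homotopy limits preserve weak equivalences of diagrams. Therefore $f^*: \map_*(Q,\holim F)\to \map_*(P,\holim F)$ is a weak equivalence, showing $\holim F$ is $f$-local.

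The only mild subtlety, and the step that requires the most care, is invoking the interchange of $\map_*$ with $\holim$ and the preservation of objectwise weak equivalences by $\holim$; both are standard facts about homotopy limits in $\T_*$, which is precisely why the author labels the proof ``entirely straightforward.''
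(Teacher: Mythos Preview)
Your proposal is correct and is precisely the ``entirely straightforward'' argument the paper alludes to but omits: identify $\LL$ with the $f$-local spaces via Lemma~\ref{lem:factorization}(a), then check locality is preserved under weak equivalences and homotopy limits using that $\map_*(K,-)$ sends weak equivalences to weak equivalences and commutes with homotopy limits. There is nothing to add or correct.
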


In the language of \cite{CLARK}, Proposition
\ref{prop:LResolving} says that $\LL$ is a \textit{resolving class}.

\subsection{$R$-Localization}\label{subsection:Rloc}
Localization with respect to a ring $R\sseq\QQ$
is   
a special case of localization of considerable importance.
We record its definition and basic properties here.

We write $p_n:S^n\to S^n$ for the degree $p$ map 
from the $n$-sphere to itself ($p$ may
be any integer, but we usually take $p$ to be a prime).
Now for a subring $R\sseq \QQ$, let 
$\P(R)$ denote the set of all primes that are invertible in $R$, 
and   define \term{$R$-localization} to be  $L_{q_R}$
where 
\[
q_R = 
%
 \bigvee_{p \in \P(R)} 
p_1
 : 
\bigvee_{p \in \P(R)} 
S^1
\longrightarrow
\bigvee_{p\in \P(R)} 
 S^1.
\]
There are other (different) definitions for rationalization, but they
all agree up to weak homotopy equivalence on simply-connected
spaces.  We write $L_R$ for 
 $R$-localization
and call a $q_R$-equivalence
an \term{$R$-equivalence}.

It is easy to detect simply-connected
$R$-local spaces in terms of their standard
algebraic invariants.
An abelian group $G$ is said to be \term{$R$-local} if 
the map 
$G\otimes_\ZZ \ZZ \to G\otimes_\ZZ R$ 
induced by the inclusion $\ZZ\inclds R$ is an
isomorphism.

\begin{prop}[Sullivan]
\label{prop:algchar}
Let $R\sseq \QQ$ be a subring.  
If $X$ is simply-connected, then the following are equivalent:
\begin{enumerate}
\item
$X$ is $R$-local,
\item
$\pi_n(X)$ is $R$-local for  all $n\geq 2$,
\item
$\twdl H_n(X;\ZZ)$ is $R$-local for all  $n\geq 2$.
\end{enumerate}
\end{prop}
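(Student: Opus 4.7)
My plan is to establish $(1)\Leftrightarrow(2)$ directly from the definition of $q_R$-local, and then derive $(2)\Leftrightarrow(3)$ by a standard Serre-class argument.

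For $(1)\Leftrightarrow(2)$, I would use that a mapping space out of a wedge is a product to identify $\map_*\bigl(\bigvee_{p\in\P(R)} S^1,\, X\bigr) \simeq \prod_{p\in\P(R)} \Omega X$, under which $q_R^*$ becomes the product of the self-maps $(p_1)^*:\Omega X\to \Omega X$. So $X$ is $q_R$-local iff each $(p_1)^*$ is a weak equivalence. A routine adjunction identifies the effect of $(p_1)^*$ on $\pi_n(\Omega X) = \pi_{n+1}(X)$ with multiplication by $p$---it is induced by the degree-$p$ self-map $\id\smsh p_1$ of $S^{n+1}$ applied to the adjoint. Thus $(1)$ amounts to multiplication by every $p\in\P(R)$ being an isomorphism on $\pi_m(X)$ for all $m\geq 1$; since $X$ is simply-connected, $\pi_1(X) = 0$ makes the $m=1$ case automatic and we recover exactly $(2)$.

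For $(2)\Leftrightarrow(3)$, I would work with the class $\C_R$ of $R$-local abelian groups.  Because $R$ is flat over $\ZZ$, $\C_R$ is closed under subgroups, quotients, and extensions, and because tensor products and $\Tor$'s inherit an $R$-module structure, $\C_R$ is also closed under $\otimes_\ZZ$ and $\Tor_1^\ZZ$ with arbitrary abelian groups. Given these closure properties, the direction $(2)\Rightarrow(3)$ follows by induction up the Postnikov tower of $X$, using the Serre spectral sequences of the stages $K(\pi_n(X), n)\to X[n]\to X[n-1]$, and the direction $(3)\Rightarrow(2)$ runs symmetrically up the Whitehead tower: at each step the Serre spectral sequence of $X\langle n-1\rangle \to X\langle n-2\rangle \to K(\pi_{n-1}(X), n-1)$ keeps $H_*(X\langle n-1\rangle;\ZZ)$ in $\C_R$, and then the Hurewicz isomorphism $\pi_n(X)\cong H_n(X\langle n-1\rangle;\ZZ)$ places $\pi_n(X)$ in $\C_R$.

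The main technical obstacle is the inductive input that $H_*(K(G, n);\ZZ)\in\C_R$ whenever $G\in \C_R$, itself an induction on $n$ through the Serre spectral sequence of the path-loop fibration $K(G, n-1)\to PK(G, n)\to K(G, n)$. Once that input is in hand, the rest of the argument is formal bookkeeping with spectral sequences and the closure properties of $\C_R$.
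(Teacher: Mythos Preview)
The paper does not actually prove this proposition; it attributes it to Sullivan and refers the reader to \cite[Thm.~2.1]{MR0442930}. So there is no in-paper argument to compare against, and your job is simply to give a correct proof.

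Your argument for $(1)\Leftrightarrow(2)$ is fine: unwinding the definition of $q_R$-local via $\map_*(\bigvee S^1,X)\simeq\prod\Omega X$ does reduce the condition to multiplication by each $p\in\P(R)$ being an automorphism of every $\pi_m(X)$, which is exactly $R$-locality of the homotopy groups.

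There is, however, a genuine error in your $(2)\Leftrightarrow(3)$ set-up. The class $\C_R$ of $R$-local abelian groups is \emph{not} closed under subgroups or quotients: $\ZZ\subset\QQ$ is a subgroup of a $\QQ$-local group that is not $\QQ$-local, and $\QQ/\ZZ$ is a quotient of a $\QQ$-local group on which multiplication by any prime fails to be injective. Flatness of $R$ over $\ZZ$ does not help here---it tells you $-\otimes R$ is exact, not that the property ``$G\to G\otimes R$ is an isomorphism'' passes to sub- or quotient groups. So $\C_R$ is not a Serre class in the classical sense, and the argument as you have phrased it does not go through.

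The fix is small but worth stating correctly. Since $R\sseq\QQ$, an abelian group is $R$-local if and only if it admits a (necessarily unique) $R$-module structure, and any $\ZZ$-homomorphism between $R$-modules is automatically $R$-linear. Hence $\C_R$ is closed under kernels, cokernels, and extensions \emph{of maps between its own objects}, as well as under $\otimes_\ZZ$ and $\Tor_1^\ZZ$ with anything. That is precisely the closure needed to push $R$-locality through the Serre spectral sequences in your Postnikov and Whitehead tower inductions, including the $H_*(K(G,n);\ZZ)$ input. Alternatively, you can run classical mod-$\C$ theory with the honest Serre class $\C'=\{G: G\otimes R=0\}$ and compare $X$ to an explicit $R$-localization $X_R$. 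Either way, once you replace the false ``subgroups and quotients'' claim with the correct abelian-subcategory statement, the rest of your outline is sound.
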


Together with the exactness of the $R$-localization 
of abelian groups, Proposition \ref{prop:algchar}
implies the two basic homotopy-theoretical 
properties of $R$-lo\-cal\-i\-za\-tion of spaces.

\begin{thm}[Sullivan]
\label{thm:Qprops}
If $R\sseq \QQ$, then 
$L_R$ commutes with both fiber sequences and 
cofiber sequences of simply-connected 
spaces.
\end{thm}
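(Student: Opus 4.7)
The plan is to handle the cofiber and fiber cases separately via dual strategies. Both rely on the algebraic characterization in Proposition \ref{prop:algchar} together with a single purely algebraic fact: for $R\sseq\QQ$, the class of $R$-local abelian groups is closed under kernels, cokernels, and extensions. (This amounts to the exactness of $-\otimes_\ZZ R$ combined with the vanishing $\Tor^\ZZ_1(R,G)=0$ for every abelian group $G$, i.e., the flatness of $R$ over $\ZZ$.) I will use this closure property in both parts without further comment.

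For cofiber sequences, I would start with $A\xrightarrow{\alpha}B\to C$ of simply-connected spaces and form $C':=C_{L_R(\alpha)}$, the cofiber of $L_R(\alpha):L_R(A)\to L_R(B)$. Since $L_R(B)$ is simply-connected, so is $C'$ (by van Kampen applied to $C'=L_R(B)\cup_{L_R(A)} CL_R(A)$). To show $C'$ is $R$-local, by Proposition \ref{prop:algchar} it suffices to verify that $\twdl H_n(C';\ZZ)$ is $R$-local for each $n\geq 2$; this follows from the reduced-homology long exact sequence of $L_R(A)\to L_R(B)\to C'$, as the flanking homology groups are $R$-local. Knowing $C'$ is $R$-local, Proposition \ref{prop:cutecofibertrick} identifies the comparison $C\to C'$ as an $R$-localization, whence $L_R(A)\to L_R(B)\to L_R(C)$ is a cofiber sequence.

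For fiber sequences, I would dualize: given $F\to E\to B$ of simply-connected spaces, let $F'$ be the homotopy fiber of $L_R(E)\to L_R(B)$. Proposition \ref{prop:LResolving} makes $F'$ automatically $R$-local, as it is a homotopy limit of $R$-local spaces. To see $F'$ is simply-connected, note that simple-connectivity of $F$ forces $\pi_2(E)\to\pi_2(B)$ to be surjective; this persists after tensoring with $R$, so $\pi_2(L_R(E))\to\pi_2(L_R(B))$ is surjective, and the long exact homotopy sequence of $F'\to L_R(E)\to L_R(B)$ then yields $\pi_1(F')=0$. A natural comparison $L_R(F)\to F'$ exists because the composite $F\to E\to L_R(E)\to L_R(B)$ is nullhomotopic. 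Both source and target are simply-connected and $R$-local, so by Proposition \ref{prop:algchar} it suffices to check isomorphism on homotopy groups, which I would obtain by applying the 5-lemma to the long exact sequence of $F'\to L_R(E)\to L_R(B)$ and the $R$-tensored long exact sequence of $F\to E\to B$.

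I do not anticipate a serious obstacle: the algebraic closure lemma is the whole technical content, and everything else is routine diagram-chasing. The conceptually important point is that no such closure property has any right to hold for a general localization functor $L_f$, and it is precisely this gap that the main results of the present paper are designed to navigate.
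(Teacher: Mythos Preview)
Your argument is correct and is exactly what the paper has in mind: the sentence immediately preceding the theorem says that Proposition~\ref{prop:algchar} together with the exactness of $R$-localization of abelian groups yields the result, and the paper then simply cites Sullivan rather than spelling out the details. Your write-up supplies precisely those details, invoking the paper's own Propositions~\ref{prop:cutecofibertrick} and~\ref{prop:LResolving} to handle the comparison maps in the cofiber and fiber cases respectively.
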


These results are well-known;  they may be found in 
  \cite[Thm. 2.1]{MR0442930} and \cite[Thm. 2.5]{MR0442930}.

\begin{cor}
\label{cor:Rlocalviaq}
If $X$ is $(N-1)$-connected, then 
  $X\to L_{\s^{N-1} q_R}(X)$   is  $R$-localization.
     \end{cor}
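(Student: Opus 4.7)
The plan is to apply Proposition \ref{prop:flocalisqlocal} with that proposition's $f$ taken to be $q_R$ and its $q$ taken to be $\s^{N-1}q_R$; this will identify $L_{\s^{N-1}q_R}(X)$ with $L_{q_R}(X) = L_R(X)$.

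The first ingredient---that $\s^{N-1}q_R$ is a $q_R$-equivalence---amounts to showing every $q_R$-local space $Z$ is $\s^{N-1}q_R$-local. But $Z$ is $q_R$-local exactly when each $p\in \P(R)$ acts invertibly on $\pi_n(Z)$ for all $n\geq 1$, and this certainly implies the same for $n\geq N$, which is the condition for $\s^{N-1}q_R$-locality.

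The heart of the argument is showing that $W := L_{\s^{N-1}q_R}(X)$ is itself $q_R$-local. The $\s^{N-1}q_R$-locality of $W$ already guarantees $R$-local $\pi_n(W)$ for $n\geq N$, so the problem reduces to establishing that $W$ is $(N-1)$-connected. I would do this via the Postnikov truncation $\tau:W\to W[N-1]$. Any $(N-1)$-coconnected space is $\s^{N-1}q_R$-local, since the mapping spaces $\map_*(\bigvee_{\P(R)}S^N,-)$ appearing in the defining condition are contractible whenever the target has no homotopy in degrees $\geq N$; in particular $W[N-1]$ is $\s^{N-1}q_R$-local. Moreover, any map $X\to W[N-1]$ factors through the Postnikov section $X\to X[N-1]\simeq *$ (because $W[N-1]$ is already $(N-1)$-coconnected, while $X$ is $(N-1)$-connected), so the composite $X\to W\xrightarrow{\tau}W[N-1]$ is nullhomotopic. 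By the uniqueness clause of Lemma \ref{lem:factorization}(b), applied to the $f$-localization $X\to W$ with $f=\s^{N-1}q_R$ and to the $f$-local target $W[N-1]$, this forces $\tau$ itself to be null; since $\tau$ induces isomorphisms on $\pi_n$ for $n<N$, we conclude that $\pi_n(W)=0$ in those degrees.

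With both hypotheses verified, Proposition \ref{prop:flocalisqlocal} delivers a weak equivalence $W\to L_{q_R}(X) = L_R(X)$ compatible with the coaugmentations, so $X\to L_{\s^{N-1}q_R}(X)$ is an $R$-localization as claimed. The principal obstacle is the connectivity-preservation step: without knowing that $W$ remains $(N-1)$-connected, non-$R$-local homotopy in low degrees could destroy $q_R$-locality, and the Postnikov-truncation argument via Lemma \ref{lem:factorization}(b) is exactly what bridges the $\s^{N-1}q_R$-locality hypothesis on $W$ with the $(N-1)$-connectivity hypothesis on $X$.
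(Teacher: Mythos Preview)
Your proof is correct. The paper states this result as an unproved corollary of the preceding material (Proposition~\ref{prop:algchar} and Theorem~\ref{thm:Qprops}), so there is no explicit argument to compare against; your route through Proposition~\ref{prop:flocalisqlocal}, with the Postnikov-truncation step supplying the crucial $(N-1)$-connectivity of $L_{\s^{N-1}q_R}(X)$, is a natural and complete way to fill in what the paper leaves to the reader. One small remark: your characterization of $q_R$-locality in terms of homotopy groups is only literally correct for simply-connected targets, but this does not affect the argument, since the implication ``$q_R$-local $\Rightarrow$ $\s^{N-1}q_R$-local'' follows directly from Proposition~\ref{prop:Lfproperties}(a), and by the time you need $W$ to be $q_R$-local you have already shown it is $(N-1)$-connected, so Proposition~\ref{prop:algchar} applies.
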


In the special case $R = \QQ$ it is well-known that 
we can say 
quite a lot more.

\begin{thm}
\label{thm:Qprops}
For any simply-connected space $X$, 
\begin{problist}
\item
$\s X_\QQ\sim \bigvee_\alpha S^{n_\alpha}_\QQ$
for various values of $n_\alpha$, and
\item
$\om X_\QQ\sim
\prod_\alpha K(\QQ,m_\beta)$ for various values of $m_\beta$.
\end{problist}
\end{thm}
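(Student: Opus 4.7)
The plan is to construct the two splittings as explicit maps and verify they are weak equivalences via a rational Whitehead argument. For (a), I will build a map $\bigvee_\alpha S^{n_\alpha}_\QQ \to \s X_\QQ$ that is an isomorphism on $\twdl H_*(-;\QQ)$; for (b), a map $\om X_\QQ \to \prod_\beta K(\QQ,m_\beta)$ that is an isomorphism on $H^*(-;\QQ)$. In both cases, the structural input---identifying the indexing degrees and realizing chosen basis elements by maps of spaces---is supplied by the Milnor--Moore theorem.

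For part (a), first choose a basis $\{x_\alpha\}$ of $\twdl H_*(\s X;\QQ)$ with $\deg x_\alpha = n_\alpha \geq 2$. Milnor--Moore identifies $\pi_*(\om \s X)\otimes \QQ$ with the free graded Lie algebra on $\twdl H_*(X;\QQ)$; in particular, the rational Hurewicz map $\pi_{n_\alpha}(\s X)\otimes\QQ \to \twdl H_{n_\alpha}(\s X;\QQ)$ is surjective, so each $x_\alpha$ is realized by some $\phi_\alpha:S^{n_\alpha}\to \s X_\QQ$. Because $\s X_\QQ$ is $\QQ$-local, $\phi_\alpha$ factors through $S^{n_\alpha}_\QQ$; and because $\bigvee_\alpha S^{n_\alpha}_\QQ$ has integral homology consisting of $\QQ$-vector spaces, it is itself $\QQ$-local by Proposition \ref{prop:algchar}. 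The factored maps therefore assemble into $\phi:\bigvee_\alpha S^{n_\alpha}_\QQ \to \s X_\QQ$, which induces the chosen basis isomorphism on $\twdl H_*(-;\QQ)$. Both source and target are simply-connected $\QQ$-local spaces, so $\phi$ is a weak equivalence by Whitehead's theorem.

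For part (b), the space $\om X_\QQ$ is a rational H-space, so its cohomology is a graded-commutative Hopf algebra over a field of characteristic zero. Milnor--Moore gives an algebra isomorphism $H^*(\om X_\QQ;\QQ) \cong \Lambda(P)$, where $P = PH^*(\om X_\QQ;\QQ)$ is the primitive submodule. A primitive class of degree $m$ is precisely an H-map $\om X_\QQ \to K(\QQ,m)$. Choose a graded basis $\{y_\beta\}$ of $P$ with $\deg y_\beta = m_\beta$ and assemble the corresponding H-maps into $\psi:\om X_\QQ \to \prod_\beta K(\QQ,m_\beta)$. On cohomology $\psi^*$ carries the fundamental class $\iota_\beta$ to $y_\beta$, and since both Hopf algebras are freely generated by these classes, $\psi^*$ is an isomorphism. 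The loop space $\om X_\QQ$ is nilpotent and rational (its $\pi_1 = \pi_2(X)\otimes\QQ$ is an abelian $\QQ$-vector space), so Whitehead concludes.

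The crucial ingredient, and the main obstacle if one declined to cite Milnor--Moore directly, is the surjectivity of the rational Hurewicz map for $\s X$ used in part (a); any alternative route, such as the rational James splitting $\s\om \s X_\QQ \sim \bigvee_n \s X_\QQ^{\smsh n}$ or a cell-by-cell construction over a rational cell decomposition of $\s X$, essentially re-proves the same fact. Part (b) dualizes this: the Milnor--Moore structure theorem replaces the Hurewicz statement, and products of Eilenberg--MacLane spaces replace wedges of spheres.
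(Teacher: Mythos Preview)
The paper does not prove this theorem; it is quoted as a well-known classical fact (introduced with ``it is well-known that we can say quite a lot more'') and no argument is supplied, so there is nothing in the paper to compare against.

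Your argument for (a) is correct: the rational Hurewicz map for a suspension is indeed surjective---your Milnor--Moore plus Bott--Samelson route is one valid way to see this---and the rest is the standard rational Whitehead argument.

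Your argument for (b), however, has a genuine gap. You assert that $H^*(\om X_\QQ;\QQ)$ is freely generated as a graded-commutative algebra by its \emph{primitives}. The Hopf--Borel structure theorem only says that a connected graded-commutative Hopf algebra over $\QQ$ is free as an algebra; freeness on the primitives would additionally require cocommutativity, i.e.\ commutativity of the Pontryagin product on $H_*(\om X_\QQ;\QQ)$, and this fails in general. For $X=S^2\vee S^2$ one has $H_*(\om X;\QQ)\cong T(a,b)$ with $|a|=|b|=1$, so $PH^*(\om X;\QQ)=(QH_*)^\vee$ is two-dimensional and concentrated in degree~$1$; your $\psi$ would then land in $K(\QQ,1)^2$, whereas $\om X_\QQ$ is an infinite product of Eilenberg--MacLane spaces. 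The repair is to choose the $y_\beta$ to be lifts of a basis of the \emph{indecomposables} $QH^*(\om X_\QQ;\QQ)$ rather than a basis of the primitives; $\psi$ is then no longer an H-map, but $\psi^*$ carries free algebra generators to free algebra generators of the same graded ranks and is therefore an isomorphism, so the rational Whitehead argument goes through (with some care about infinite products when $X$ is not of finite type). Alternatively, one can bypass cohomology entirely and argue via the Postnikov tower or the Sullivan minimal model of $\om X$, which has zero differential.
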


This implies a closure property for resolving classes
of rational spaces.

 \begin{lem}
 \label{lem:Qresclass}
 Let $\R$ be a resolving class.\footnote{That is, $\R$ is nonempty
 and closed under weak equivalence and homotopy limits;
 according to Proposition \ref{prop:LResolving}, 
 $\LL= \im(L_f)$ is such a class.} 
 If there is a simply-connected
 rational space $X\in \R$ with $\pi_n(X) \neq 0$, 
 then $\R$ contains all 
 simply-connected rational spaces with $\pi_k(Y) = 0$
 for $k\geq n$.
 If for each $n\in \NN$ there is a rational space 
 $X_n\in\R$   with $\pi_k(X_n)\neq 0$ for some $k\geq n$, 
 then $\R$ contains all simply-connected rational spaces.
 \end{lem}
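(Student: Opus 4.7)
The plan is to produce from $X$ enough Eilenberg--MacLane spaces inside $\R$ and then to assemble a general simply-connected rational $Y$ out of them via its Postnikov tower, making every construction a homotopy limit so we stay inside $\R$.

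First I would extract Eilenberg--MacLane factors from $\Omega X$. Since $\R$ is closed under homotopy limits and $\Omega X$ is the homotopy pullback of $*\to X\leftarrow *$, we have $\Omega X\in\R$. The simply-connected rational hypothesis lets us invoke \thmref{thm:Qprops}(b), producing a splitting $\Omega X\simeq \prod_\beta K(\QQ,m_\beta)$; the assumption $\pi_n(X)\neq 0$ forces $K(\QQ,n-1)$ to appear as one of the factors. Each factor is a retract, hence a homotopy equalizer of $\id$ with $i\circ r$, so $K(\QQ,n-1)\in\R$; iterated looping then gives $K(\QQ,k)\in\R$ for every $1\leq k\leq n-1$, and arbitrary products of these lie in $\R$ as well.

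For the first statement, take a simply-connected rational $Y$ with $\pi_k(Y)=0$ for $k\geq n$ and climb its Postnikov tower $Y^{(2)}\to Y^{(3)}\to\cdots\to Y^{(n-1)}=Y$. Stage $Y^{(k)}$ is the homotopy pullback of $Y^{(k-1)}\to K(\pi_k(Y),k+1)\leftarrow *$; since $\pi_k(Y)$ is a $\QQ$-vector space, the middle term is a product of copies of $K(\QQ,k+1)$, which lies in $\R$ by the previous paragraph. Inductively every $Y^{(k)}\in\R$, giving $Y\in\R$. The second statement then reduces to the first: for each $n$, the first part applied to $X_n$ places in $\R$ every simply-connected rational space whose homotopy vanishes above the nontrivial degree $k\geq n$ of $X_n$, and since a general simply-connected rational $Y$ is the homotopy limit of its Postnikov tower of finite-length truncations, $Y\in\R$.

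The delicate point I expect to work through is the indexing at the top Postnikov stage: the classifying map for $Y^{(n-1)}$ lands in $K(\QQ,n)$, whereas the loop-space decomposition of $\Omega X$ only directly produces $K(\QQ,k)\in\R$ for $k\leq n-1$. Aligning these indices -- either by sharpening the induction on $n$ so the top stage is absorbed into a smaller case, or by extracting the extra Eilenberg--MacLane space from $X$ itself (not just from $\Omega X$) -- will be the main bookkeeping step to pin down.
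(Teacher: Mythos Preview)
Your approach is exactly the paper's: loop $X$, use \thmref{thm:Qprops}(b) to peel off $K(\QQ,n-1)$ as a retract, deduce $K(V,k)\in\R$ for $k<n$, and then climb the Postnikov tower of $Y$; the paper compresses all of this into two sentences.

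The indexing issue you flag is genuine and the paper simply elides it: the $k$-invariant for the top stage $Y^{(n-1)}$ lives in $K(\pi_{n-1}(Y),n)$, one degree above what $\Omega X$ hands you. Your second suggested fix---extract the missing Eilenberg--MacLane space from $X$ rather than from $\Omega X$---is the right one, and it goes as follows. Iteratively form connected covers
\[
X\langle m\rangle \;=\; \mathrm{hofib}\bigl(X\langle m-1\rangle \to K(\pi_m(X),m)\bigr),
\qquad 2\le m\le n-1,
\]
each of which is a homotopy pullback of spaces already known to lie in $\R$ (since $K(\pi_m(X),m)\in\R$ for $m<n$). Thus $X\langle n-1\rangle\in\R$. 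Now there are two cases. If $\pi_k(X)=0$ for all $k>n$, then $X\langle n-1\rangle\simeq K(\pi_n(X),n)$, so $K(\QQ,n)$ is a retract and lies in $\R$. If instead some $\pi_k(X)\neq 0$ with $k>n$, then $K(\QQ,k-1)$ already occurs as a retract of $\Omega X$, and looping gives $K(\QQ,n)\in\R$. Either way you obtain $K(\QQ,n)\in\R$, and the top Postnikov step goes through. For the second statement no such repair is needed: since the $X_n$ have nontrivial homotopy in arbitrarily high degrees, every $K(\QQ,m)$ lies in $\R$ and the tower argument is unobstructed.
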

 
 \begin{proof}
Since $K(\QQ,n-1)$ is a retract of $\om X\in \R$,   
   $K(V, k)\in \R$ for all rational vector spaces 
 $V$ and all $k< n$.
 Now the result follows easily by induction on the Postnikov
 system of $Y$.
 \end{proof}

\section{Proof of Theorem \ref{thm:Main1}}

 It is well-known that (1) implies (2).
 
Suppose we have (2).
We begin our proof of (1)
by  studying the $f$-localization of $(N-1)$-connected
spheres.
Applying (2) to the cofiber sequences
\[
S^{n} \to * \to S^{n+1} \mmathand S^{n} \xrightarrow{*}
  S^m \to S^{m}\wdg S^{n+1},
\]
  reveals that $L_f(S^{n+1}) \simeq\s L_f(S^{n})$
and $L_f(S^m\wdg S^{n+1}) \simeq L_f(S^m)\wdg L_f(S^{n+1})$, as long 
as $m,n\geq N$.    
Inductively, we obtain weak equivalences
$S^m \smsh L_f(S^n)
\xrightarrow{\sim}  L_f(S^m\smsh S^n)$ 
for all $m\geq 1$ and $n\geq N$.
Next (still with  $m,n\geq N$)
apply $L_f$ to the cofiber sequence
\[
S^m\wdg S^{n+1} \to S^m\cross S^{n+1} \to S^m \smsh S^{n+1}
\]
to obtain
\begin{eqnarray*}
L_f (S^m \smsh S^{n+1}) &\sim&
L_f(S^m\cross S^{n+1}) / L_f(S^m\wdg S^{n+1})
\\
&\sim &
(L_f(S^m)\cross L_f(S^{n+1})) / (L_f(S^m)\wdg L_f(S^{n+1}))
\\
&\sim &
L_f(S^m)\smsh L_f(S^{n+1}), 
\end{eqnarray*}
using Proposition \ref{prop:Lfproperties}(c).

We use these equivalences to 
 construct the solid arrow part of the diagram
 \[
 \xymatrix{
	S^m \smsh S^{n+1}  	%
                 \ar[d]_{   \iota_{S^m\smsh S^{n+1}} }  
                 \ar[rr]^-{   \id_{S^m} \smsh \iota_{S^{n+1}} }
&& 
		S^m \smsh L_f(S^{n+1})	
		\ar[lld]_{    \iota_{S^m\smsh L_f(S^{n+1})}  }%
                  \ar[d]^{    \iota_{S^m} \smsh \id_{L_f(S^{n+1})} }
\\
		L_f(S^m\smsh S^{n+1})%
&&  
	L_f(S^m )\smsh L_f(S^{n+1})	%
		         \ar@{..>}[ll]_-{\epsilon_{m,{n+1}}  }
}
\]
for $m,n \geq N$.
Proposition \ref{prop:Lfproperties}(a) implies that
  the solid arrows in the diagram 
 are $f$-equivalences.
Since
 $L_f(S^m)\smsh L_f(S^{n+1})\in \LL$, 
the dotted arrow 
$\epsilon_{m,n+1}$ exists and is unique up to homotopy 
by the universal factorization property of $L_f$
given in Lemma \ref{lem:factorization}(b).
Further, the commutativity of the diagram implies that 
that $\epsilon_{m,n+1}$ is also an $f$-equivalence.  
Since all the spaces   in the  lower
right triangle are $f$-local,  all three maps
in that triangle  are   weak equivalences
by Lemma \ref{lem:factorization}(a).

We next determine the connectivity of $L_f(S^n)$.
 Write $\conn (L_f(S^N)) = c$;  since $L_f(S^N)$
  is simply-connected, 
 we have 
 \[
 \conn( L_f(S^n)) = \conn ( \s^{n-N} L_f(S^N) )  = c + (n-N)
 \]
 for all $n \geq N$, so we may work with $n > N$, 
and we have the equation
\[
\conn ( L_f(S^n) \smsh L_f(S^n)) = 2(c + (n-N)) +1.
\]
Since $L_f(S^n)$ is simply-connected, 
the homotopy-commutative diagram
of weak equivalences
\[
 \xymatrix{
    S^n\smsh  L_f(S^n)    %
        \ar[r]^-{ \sim   }
        \ar@/_/[rd]_{  \sim }
        &
   L_f(S^n) \smsh L_f(S^n)     %
        \ar[d]_{  \sim  }^{\epsilon_{n,n}}
        &
        L_f(S^n) \smsh S^n \ar@/^/[ld]^\sim\ar[l]_-\sim
\\
		&
	L_f(S^{2n})	 %
}
\]
shows that  
\[
n + (c +(n - N)) 
= \conn( \s^n L_f(S^n)) 
= \conn( L_f(S^n)\smsh L_f(S^n) ) 
= 2(c + (n-N)) +1, 
\]
so  $c = N-1$ and $\conn(L_f(S^n)) = n-1$ for all $n\geq N$.

  Now let $n > N$ be an even integer and write
  $L = L_f(S^n)$.
  We determine the graded abelian group  
  $A_* = \s^{-n} \twdl H_*(L ;\ZZ)$.
  Applying integral homology to the     diagram above
 results in the diagram
\[
 \xymatrix{
    \ZZ \otimes_\ZZ A_*     %
        \ar[r]^-{     }
        \ar[d]_\cong^\kappa
        &
    A_*\otimes_\ZZ A_*    %
        \ar[d]^{  \kappa   }
        & 
        A_*\otimes_\ZZ \ZZ
        \ar[l] 
        \ar[d]^\kappa_\cong
\\
 A_*\ar[r]^-\cong\ar@/_/[rd]_\cong
& 
\s^{-2n} \twdl H_*(L\smsh L;\ZZ)\ar[d]^\cong
&
 A_*\ar[l]_-\cong\ar@/^/[ld]^\cong
\\
		&
	A_* 	.%
}
\]
The commutativity of the diagram   implies 
 that the exterior product map 
 $\kappa: A_*\otimes A_* \to \s^{-2n}
 \twdl H_*(L\smsh L;\ZZ)$
is an isomorphism ($\kappa$ is injective by the K\"unneth theorem).
Write $A_+$ for the positive degree part of $A_*$.  
Since $(\ZZ \otimes_\ZZ A_+)\cap (A_+\otimes_\ZZ \ZZ) = 0$ 
in $A_*\otimes_\ZZ A_*$, 
it must be that $A_+\cong \ZZ \otimes_\ZZ A_+  =  0$ 
or else the vertical composite could not be injective in positive 
degrees.
Thus
$A_*$ is some group $R$ concentrated in 
 degree zero, and the  vertical
composite  
reduces to an isomorphism
$\mu: R\otimes_\ZZ R\xrightarrow{\cong} R$.

It follows directly from the commutativity and 
associativity of the smash product that 
the map $\mu$ gives $R$ the structure of a 
commutative ring.  Since this product is an isomorphism, 
$R$ is a \textit{solid ring} (defined and studied
by Bousfield and Kan in \cite{MR0308107}).
Applying the K\"unneth theorem to compute 
$A_1$ (which we 
   know to be zero), we find that $\Tor_\ZZ(R,R) = 0$;
we
show that if $R$ is a solid ring with $\Tor_\ZZ(R,R) = 0$, 
then $R$ is isomorphic to a subring of $\QQ$.
Let $T\sseq R$ be the torsion subgroup.
We cannot have  $T = R$,  for then $R\cong {\mathbb Z}/d$ by
 \cite[Lem. 3.6]{MR0308107}, and this  forces
  $\Tor_\ZZ(R,R) \neq 0$.  
Therefore we may apply \cite[Lem. 3.10]{MR0308107},   
to  the short exact sequence
\[
0\rightarrow T\longrightarrow R \longrightarrow R/T\rightarrow 0
\]
to discover that 
$R/T$ is isomorphic to a subring   $S\sseq \QQ$ (and is 
therefore a  flat $\ZZ$-module) and that $T$ is a sum of cyclic groups.  
It follows that if $T\neq 0$, then  $\Tor_\ZZ(T,T) \neq 0$, and that
$R \cong S \oplus T$ as
abelian groups.
Therefore we may compute
\begin{eqnarray*}
\Tor_\ZZ ( R, R) 
&\cong&
\Tor_\ZZ(S, S) \oplus 
\Tor_\ZZ (S, T) \oplus 
\Tor_\ZZ (T,S) \oplus
\Tor_\ZZ(T,T) 
\\
&\cong&
 \Tor_\ZZ(T,T), 
\end{eqnarray*}
showing that if $\Tor_\ZZ(R,R)=0$, 
then $T = 0$ and $R \cong S\sseq \QQ$.
Thus  the $f$-localization  of $S^n$ takes the form
 $\iota_{S^n}: S^n \to M(R,n)$ for all $n\geq N$.
 
We claim   that $\iota_{S^n}$ is $R$-localization
for all $n\geq N$.
Let $\lambda:S^n \to M(R,n)$ 
represent $1\in R\cong \pi_n(M(R,n))$; then 
$\iota_{S^n} = r\cdot \lambda$ for some $r \in R$. 
Since, as we showed above,  
 $L_f$ commutes with suspension of $(N-1)$-connected
spheres,  this $r$   is the same for all $n\geq N$. 
To show that $\iota_{S^n}$ is $R$-localization, 
we need to show that
$r$ is invertible in $R$.  
Note that $r$ cannot be zero, for if  $r = 0$, 
then $\iota_{S^n}: S^n \to L_f(S^n)$ is 
nullhomotopic, forcing   $L_f(S^n)$ to  be contractible.
Since $M(R,n)$ is $f$-local, $L_f(M(R,n)) \sim M(R, n)$
and the coaugmentation $M(R,n)\to L_f(M(R,n))$
can be identified with $t \cdot \id$ for some 
invertible $t\in R$.
Writing $L_f(\lambda ) = s \cdot \id$
for some $s\in R$,
we obtain 
  the diagram
\[
\xymatrix{
		S^n %
                 \ar[d]_{ r\cdot\lambda  }  
                 \ar[rr]^-{  \lambda }
		\ar@{}[rrd]|{     }
&& 
	M(R,n)	%
                  \ar[d]^{ t \cdot \id }
\\
M(R,n)		%
	         \ar[rr]^-{ s\cdot \id}
&&  
	 M(R,n)  	. %
}
\]
This shows that $rs = t  $; and since 
$t$ is invertible, so are $r$ and $s$.
Therefore $\iota_{S^n}$ is $R$-localization
for $n\geq N$.\footnote{A similar
result for smashing localizations of spectra 
was established in \cite[Thm. 5.14]{MR2139526}.}

Now we show that $\s^{N-1} q_R$ is an $f$-equivalence; for this
it suffices to show that $p_N$ (see Section \ref{subsection:Rloc}
for notation)
is an $f$-equivalence for each 
$p\in \mathcal{P}(R)$.
Because $L_f$ commutes with cofiber sequences, 
$L_f(S^n\wdg S^n) \sim L_f(S^n)\wdg L_f(S^n)$
for $n >N$.  Thus
  the   co-H structure map $S^n \to S^n\wdg S^n$
  is carried by $L_f$ to the co-H structure map 
  $M(R,n) \to M(R, n)\wdg M(R,n)$ for $n > N$ (both structure maps
  are unique up to homotopy).
It follows that  $L_f(\s \alpha + \s \beta) = \s L_f(\alpha) + \s  L_f(\beta)$
for any $\alpha, \beta\in \pi_n(S^n)$.
Since $1_n = \id:S^n\to S^n$, we have $L_f(1_n) = \id_{M(R,n)}$, 
and therefore $L_f(p_n) = p\cdot \id_{M(R,n)}$.
Now if $p$ is invertible in $R$, then $L_f(p_n) = p \cdot \id$
is a weak equivalence $M(R,n) \to M(R,n)$ for $n > N$, and 
also for $n = N$ by the Freudenthal suspension theorem.

Since $ \s^{N-1} q_R$ is an $f$-equivalence,  there is a
comparison $\xi: L_{ \s^{N-1} q_R} \to L_f$
which evaluates to weak equivalences
$\xi_{S^n}$ for $n\geq N$.
By virtue of Proposition \ref{prop:flocalisqlocal},
to  prove  that $\xi_K$
 is a weak 
equivalence for all $K\in \K(N)$, it suffices
to show that $\K(N) \sseq \LL= \im(L_f)$.
Since $\LL$ is a resolving class, the desuspension theorem 
\cite[Thm. 8]{CLARK} shows that it suffices
to show that the class $\s \K(N)$ of suspensions of 
spaces in $\K(N)$ is contained in  $\LL$.
But $\s \K(N)\sseq \K(N+1)$, 
so it suffices to show that $\K(N+1)\sseq \LL$;
we accomplish this  by induction on the number 
of nontrivial cells in a CW decomposition of $K$. 
The initial case is the trivial case $K = *$. For the inductive step, 
fit $K$ into a cofiber sequence
 $S^n\xrightarrow{\alpha} L\to K$   
 in which $n\geq N$ and $L\in \K(N+1)$ has
strictly fewer nontrivial cells than $K$.  Then we may form the 
diagram
\[
\xymatrix{
	L_{ \s^{N-1} q_R}(S^n) %
                 \ar[d]_{\xi_{S^n} }  
                 \ar[rr]^-{ L_{ \s^{N-1} q_R}(\alpha) }
		\ar@{}[rrd]|{     }
&& 
	L_{ \s^{N-1} q_R}(L)	%
                 \ar[d]_{\xi_L  }  
                 \ar[rr]^-{  }
		\ar@{}[rrd]|{     }
&& 
C_{L_{ \s^{N-1} q_R}(\alpha)}%
                  \ar[d]^{ \zeta }
\\
	L_f(S^n)%
	         \ar[rr]^-{ L_f(\alpha) }
&& 
	L_f(L)	%
	         \ar[rr]^-{  }
&&  
	C_{L_f(\alpha)}	%
}
\]
in which the induced map $\zeta$ of 
cofibers is a weak equivalence
because $\xi_{S^n}$ and $\xi_{L}$ are weak 
equivalences by hypothesis. 
Since $L_{\s^{N-1}q_R}$ and $L_f$   commute with 
cofiber sequences of $(N-1)$-connected finite complexes, 
we have
\[
L_{ \s^{N-1} q_R}(L)\sim 
C_{L_{ \s^{N-1} q_R}(\alpha)}\sim
C_{L_f(\alpha)}\sim 
L_f(K).
\] 
This shows that 
$L_{ \s^{N-1} q_R}(K)$ is $f$-local, and we apply 
Proposition \ref{prop:flocalisqlocal}
to deduce that $\xi_K$ is a weak equivalence.
This  establishes (1) and 
completes the proof of Theorem \ref{thm:Main1}.

\section{Proof of Theorem \ref{thm:Qchar}}

 First of all, it is well-known that $\QQ$-localization
satisfies the conditions of Theorem \ref{thm:Qchar}.  
Now suppose $L_f$ satisfies those conditions.  
Then  Theorem \ref{thm:Main1} implies
that the restriction of $L_f$ to $\K(2)$
 is $R$-localization    for some $R\sseq\QQ$.  
But $R$ must be $\QQ$, for if the prime $p$  is not invertible 
in $R$, then the Moore space 
$M(\ZZ/p,n)$  is $R$-local and  cannot split as copies of
$L_f(S^{n_\alpha})  = M(R,n_\alpha)$, even  after
repeated  suspension.

The proof of Theorem \ref{thm:Main1} implies that 
$\s q_\QQ: \bigvee S^2\to \bigvee S^2$ is an $f$-equivalence.
Since the rationalizations of all simply-connected
  finite complexes are in $\LL = \im(L_f)$,
Lemma \ref{lem:Qresclass} and Corollary \ref{cor:Rlocalviaq}   
show that the rationalization of 
every simply-connected space is 
both $f$-local and $\s q_\QQ$-local.
 We deduce from Proposition \ref{prop:flocalisqlocal}
  that $X\to L_f(X)$
is $\s q_\QQ$-localization  for all simply-connected spaces  $X$;
by Corollary \ref{cor:Rlocalviaq}, this
is rationalization.


\begin{bibsection}
\begin{biblist}


%



   \bib{MR0308107}{article}{
   author={Bousfield, A. K.},
   author={Kan, D. M.},
   title={The core of a ring},
   journal={J. Pure Appl. Algebra},
   volume={2},
   date={1972},
   pages={73--81},
   issn={0022-4049},
   review={\MR{0308107 (46 \#7222)}},
}

   \bib{MR0365573}{book}{
   author={Bousfield, A. K.},
   author={Kan, D. M.},
   title={Homotopy limits, completions and localizations},
   series={Lecture Notes in Mathematics, Vol. 304},
   publisher={Springer-Verlag},
   place={Berlin},
   date={1972},
   pages={v+348},
   review={\MR{0365573 (51 \#1825)}},
}

\bib{MR0425956}{article}{
   author={Bousfield, A. K.},
   author={Gugenheim, V. K. A. M.},
   title={On ${\rm PL}$ de Rham theory and rational homotopy type},
   journal={Mem. Amer. Math. Soc.},
   volume={8},
   date={1976},
   number={179},
   pages={ix+94},
   issn={0065-9266},
   review={\MR{0425956 (54 \#13906)}},
}

\bib{MR2139526}{article}{
   author={Casacuberta, Carles},
   author={Guti{\'e}rrez, Javier J.},
   title={Homotopical localizations of module spectra},
   journal={Trans. Amer. Math. Soc.},
   volume={357},
   date={2005},
   number={7},
   pages={2753--2770 (electronic)},
   issn={0002-9947},
   review={\MR{2139526 (2006b:55009)}},
   doi={10.1090/S0002-9947-04-03552-4},
}

 \bib{MR2166179}{article}{
   author={Casacuberta, Carles},
   author={Scevenels, Dirk},
   author={Smith, Jeffrey H.},
   title={Implications of large-cardinal principles in homotopical
   localization},
   journal={Adv. Math.},
   volume={197},
   date={2005},
   number={1},
   pages={120--139},
   issn={0001-8708},
   review={\MR{2166179 (2006i:55013)}},
   doi={10.1016/j.aim.2004.10.001},
}



\bib{MR1392221}{book}{
   author={Farjoun, Emmanuel Dror},
   title={Cellular spaces, null spaces and homotopy localization},
   series={Lecture Notes in Mathematics},
   volume={1622},
   publisher={Springer-Verlag},
   place={Berlin},
   date={1996},
   pages={xiv+199},
   isbn={3-540-60604-1},
   review={\MR{1392221 (98f:55010)}},
}

\bib{MR664027}{article}{
   author={F{\'e}lix, Yves},
   author={Halperin, Stephen},
   title={Rational LS category and its applications},
   journal={Trans. Amer. Math. Soc.},
   volume={273},
   date={1982},
   number={1},
   pages={1--38},
   issn={0002-9947},
   review={\MR{664027 (84h:55011)}},
   doi={10.2307/1999190},
}

\bib{MR790674}{article}{
   author={F{\'e}lix, Yves},
   author={Lemaire, Jean-Michel},
   title={On the mapping theorem for Lusternik-Schnirelmann category},
   journal={Topology},
   volume={24},
   date={1985},
   number={1},
   pages={41--43},
   issn={0040-9383},
   review={\MR{790674 (86m:55005)}},
   doi={10.1016/0040-9383(85)90043-6},
}

\bib{MR1321002}{article}{
   author={Neisendorfer, Joseph A.},
   title={Localization and connected covers of finite complexes},
   conference={
      title={The \v Cech centennial},
      address={Boston, MA},
      date={1993},
   },
   book={
      series={Contemp. Math.},
      volume={181},
      publisher={Amer. Math. Soc.},
      place={Providence, RI},
   },
   date={1995},
   pages={385--390},
   review={\MR{1321002 (96a:55019)}},
}

\bib{MR0258031}{article}{
   author={Quillen, Daniel},
   title={Rational homotopy theory},
   journal={Ann. of Math. (2)},
   volume={90},
   date={1969},
   pages={205--295},
   issn={0003-486X},
   review={\MR{0258031 (41 \#2678)}},
}


\bib{MR2029919}{article}{
   author={Strom, Jeffrey},
   title={Miller spaces and spherical resolvability of finite complexes},
   journal={Fund. Math.},
   volume={178},
   date={2003},
   number={2},
   pages={97--108},
   issn={0016-2736},
   review={\MR{2029919 (2005b:55026)}},
   doi={10.4064/fm178-2-1},
}

\bib{CLARK}{article}{
   author={Strom, Jeffrey},
   title={Finite-dimensional spaces in resolving classes},
   journal={Fund. Math.},
   volume={},
   date={to appear},
   number={},
   pages={},
   issn={},
}

\bib{MR0442930}{article}{
   author={Sullivan, Dennis},
   title={Genetics of homotopy theory and the Adams conjecture},
   journal={Ann. of Math. (2)},
   volume={100},
   date={1974},
   pages={1--79},
   issn={0003-486X},
   review={\MR{0442930 (56 \#1305)}},
}



\end{biblist}
\end{bibsection}


\end{document}